\newtheorem{lem}{\noindent {\bf Lemma}}[section]
\newtheorem{prop}{\noindent {\bf Proposition}}[section]
\newcounter{remark}
\newcounter{example}
\date{}
\newcounter{defi}\setcounter{defi}{1}
\newenvironment{defi}{
\smallskip \noindent
{\bf
  Definition \arabic{section}.\arabic{defi}.
}}{\addtocounter{defi}{1}\par}
\newcommand{\NN}{\mathbb N}
\title{\bf A metric space with transfinite asymptotic dimension $2\omega+1$}
\author{\large  Yan Wu$^\ast$\qquad Jingming Zhu$^{\ast\ast}$
\footnote{
College of Mathematics Physics and Information Engineering, Jiaxing University, Jiaxing , 314001, P.R.China.
$^\ast$ E-mail: yanwu@mail.zjxu.edu.cn $^\ast\ast$ E-mail: jingmingzhu@mail.zjxu.edu.cn }}
\date{}
\begin{document}
\maketitle
\begin{center}
\begin{minipage}{0.9\textwidth}
\noindent{\bf Abstract.}
We construct a metric space whose transfinite asymptotic dimension and complementary-finite asymptotic dimension $2\omega+1$.\\

{\bf Keywords } Asymptotic dimension, Transfinite asymptotic dimension, Complementary-finite asymptotic dimension;

\end{minipage}
\end{center}
\footnote{
This research was supported by
the National Natural Science Foundation of China under Grant (No.11871342,11801219,11301224,11326104,11401256,11501249)

}
\begin{section}{Introduction}\

Asymptotic dimension introduced by M.Gromov \cite{Gromov} and property A by G.Yu \cite{Yu} are fundamental concepts in coarse geometry (see \cite{Nowak}).
T. Radul defined the transfinite asymptotic dimension (trasdim) which can be viewed as a transfinite extension of the asymptotic dimension and proved that for a metric space $X$, trasdim$(X)<\infty$ if and only if $X$ has asymptotic property C.  T. Radul gave examples of metric spaces with trasdim$=\infty$ and with trasdim$=\omega$, where $\omega$ is the smallest infinite ordinal number (see \cite{Radul2010}). But whether there is a metric space $X$ with $\omega<$trasdim$(X)<\infty$ (stated as``omega conjecture"in \cite{satkiewicz} by M. Satkiewicz) is unknown until recently \cite{omega+1}. In this paper, by the technique developed in \cite{omega+1}, we construct a metric spaces $X_{\omega+k}$ with trasdim$(X_{2\omega+1})=2\omega+1$, which generalized the result in \cite{omega+1} and \cite{omega+k}.

The paper is organized as follows: In Section 2, we recall some definitions and properties of transfinite asymptotic dimension. In
Section 3, we introduce a concrete metric space $X_{2\omega+1}$ whose transfinite asymptotic dimension and complementary-finite asymptotic dimension are both $2\omega+1$.
\end{section}

\begin{section}{Preliminaries}\

Our terminology concerning the asymptotic dimension follows from \cite{Bell2011} and for undefined terminology we refer to \cite{Radul2010} and \cite{omega+k}. Let~$(X, d)$ be a metric space and $U,V\subseteq X$, let
\[
\text{diam}~ U=\text{sup}\{d(x,y)| x,y\in U\}
\text{   and   }
d(U,V)=\text{inf}\{d(x,y)| x\in U,y\in V\}.
\]
Let $R>0$ and $\mathcal{U}$ be a family of subsets of $X$. $\mathcal{U}$ is said to be \emph{$R$-bounded} if
\[
\text{diam}~\mathcal{U}\doteq\text{sup}\{\text{diam}~ U~|~ U\in \mathcal{U}\}\leq R.
\]
In this case, $\mathcal{U}$ is said to be \emph{uniformly bounded}.
Let $r>0$, a family $\mathcal{U}$ is said to be\emph{ $r$-disjoint} if
\[
d(U,V)\geq r~\text{for every}~ U,V\in \mathcal{U}\text{~with~}U\neq V.
\]

In this paper, we denote
$\bigcup\{U~|~U\in\mathcal{U}\}$ by $\bigcup\mathcal{U}$, denote $\{U~|~U\in\mathcal{U}_{1}\text{~or~}~U\in\mathcal{U}_{2}\}$
by $\mathcal{U}_{1}\cup\mathcal{U}_{2}$ and
denote $\{N_{\delta}(U)~|~U\in\mathcal{U}\}$ by $N_{\delta}(\mathcal{U})$ for some $\delta>0$.
Let $A$ be a subset of $X$, we denote $\{x\in X|d(x,A)<\epsilon\}$ by $N_{\epsilon}(A)$
and denote $\{x\in X|d(x,A)\leq\epsilon\}$ by $\overline{N_{\epsilon}(A)}$ for some $\epsilon>0$.

\begin{defi}
A metric space $X$ is said to have \emph{finite asymptotic dimension} if
there exists $n\in\NN$, such that for every $r>0$,
there exists a sequence of uniformly bounded families
$\{\mathcal{U}_{i}\}_{i=0}^{n}$ of subsets of $X$
such that the family
$\bigcup_{i=0}^{n}\mathcal{U}_{i}$ covers $X$ and each $\mathcal{U}_{i}$
is $r$-disjoint for $i=0,1,\cdots,n$. In this case, we say that
the asymptotic dimension of $X$ less than or equal to $n$, which is denoted by
 asdim$X\leq n$.

We say that asdim$X= n$ if  asdim$X\leq n$ and asdim$X\leq n-1$ is not true.

\end{defi}

%
T. Radul generalized asymptotic dimension of a metric space $X$ to transfinite asymptotic dimension which is denoted by trasdim$(X)$ (see \cite{Radul2010}).
Let $Fin\mathbb{N}$ denote the collection of all finite, nonempty
subsets of $\mathbb{N}$ and let $ M \subseteq Fin\mathbb{N}$. For $\sigma\in \{\varnothing\}\bigcup Fin\mathbb{N}$, let
$$M^{\sigma} = \{\tau\in Fin\mathbb{N} ~|~ \tau \cup \sigma \in M \text{ and } \tau \cap \sigma = \varnothing\}.$$

Let $M^a$ abbreviate $M^{\{a\}}$ for $a \in \NN$. Define the ordinal number Ord$M$ inductively as follows:
\begin{eqnarray*}
\text{Ord}M = 0 &\Leftrightarrow& M = \varnothing,\\
\text{Ord}M \leq \alpha &\Leftrightarrow& \forall~ a\in \mathbb{N}, ~\text{Ord}M^a < \alpha,\\
\text{Ord}M = \alpha &\Leftrightarrow& \text{Ord}M \leq \alpha \text{ and } \text{Ord}M < \alpha \text{ is not true},\\
\text{Ord}M = \infty &\Leftrightarrow& \text{Ord}M \leq\alpha \text{ is not true for every ordinal number } \alpha.
\end{eqnarray*}

%

Given a metric space $X$, define the following collection:
\[
\begin{split}
A(X) = \{\sigma \in Fin\mathbb{N}~ |~&\text{ there are no uniformly bounded families } \mathcal{U}_i  \text{ for } i \in \sigma
 \\& \text{ such that each } \mathcal{U}_i
\text{ is } i\text{-disjoint and }\bigcup_{i\in\sigma}\mathcal{U}_i \text{~covers~} X\}.
\end{split}\]

The \emph{transfinite asymptotic dimension} of $X$ is defined as trasdim$X$=Ord$A(X)$.
\end{section}

\begin{section}{A metric space whose transfinite asymptotic dimension and complementary-finite asymptotic dimension are both $2\omega+1$}\

Let
$$X((p_1,...,p_n),(q_1,...,q_n))=\{(x_i)_{i=1}^{\sum_{j=1}^{n}q_j}\in (2^{p_1}\mathbb{Z})^{\sum_{j=1}^{n}q_j}:|\{i:x_i\notin 2^{p_{k}}\mathbb{Z}\}|\leq \sum_{j=1}^{k}q_j\text{ for }1\leq k\leq n\}$$
in which $(p_1,...,p_n),(q_1,...,q_n)\in (\mathbb{N}\cup\{0\})^n$ and $p_1\leq...\leq p_n$.\

For simplicity, we abuse the notation a little by denoting
$$X((p_1,...,p_n),(q_1,...,q_n))=\emptyset\text{ when $q_j\in \mathbb{Z}^-$ for some $j$ or when $p_1\leq...\leq p_n$ does not hold}.$$

By this new notation, the space $Y_{\omega+k}^{(n)}$ constructed in \cite{omega+k} equals to $X((k,n+k),(k,n))$ when $n\geq k$. Since trasdim$(as\bigsqcup_{n=k}^{\infty}Y_{\omega+k}^{(n)})$=trasdim$(as\bigsqcup_{n=1}^{\infty}Y_{\omega+k}^{(n)})=$trasdim$(Y_{\omega+k})=\omega+k$ for any $k\in \mathbb{N}$, so
$$\text{trasdim}(as\bigsqcup_{k=1}^{\infty}(as\bigsqcup_{n=1}^{\infty}X((k,n+k),(k,n))))\geq 2\omega.$$

By definition, $as\bigsqcup_{k=1}^{\infty}(as\bigsqcup_{n=1}^{\infty}X((k,n+k),(k,n)))\subset Y_{2\omega}$ and trasdim$(Y_{2\omega})=2\omega$, so
$$\text{trasdim}(as\bigsqcup_{k=1}^{\infty}(as\bigsqcup_{n=1}^{\infty}X((k,n+k),(k,n))))= 2\omega.$$

Now we will prove: trasdim$(\text{as}\bigsqcup_{k=1}^{\infty}(\text{as}\bigsqcup_{n=1}^{\infty}X((0,k,n),(1,k,n-k))))=2\omega+1$.\\

\begin{prop}\rm
\label{leq2omega+1}
trasdim$(\text{as}\bigsqcup_{k=1}^{\infty}(\text{as}\bigsqcup_{n=1}^{\infty}X((0,k,n),(1,k,n-k))))\leq2\omega+1$.
\end{prop}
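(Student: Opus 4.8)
The plan is to work entirely with the combinatorial reformulation $\text{trasdim}(Z)=\text{Ord}\,A(Z)$, where we abbreviate $Z:=\text{as}\bigsqcup_{k\ge 1}Z_k$ and $Z_k:=\text{as}\bigsqcup_{n\ge1}X((0,k,n),(1,k,n-k))$, and to verify $\text{Ord}\,A(Z)\le 2\omega+1$ by unwinding the inductive definition of $\text{Ord}$. Concretely, $\text{Ord}\,A(Z)\le 2\omega+1$ is equivalent to: for every $a\in\NN$, $\text{Ord}\,A(Z)^{a}\le 2\omega$; and $\text{Ord}\,A(Z)^{a}\le 2\omega$ is in turn equivalent to: for every $b\in\NN$, $\text{Ord}\,A(Z)^{\{a,b\}}<2\omega$, i.e.\ $\text{Ord}\,A(Z)^{\{a,b\}}\le\omega+m$ for some finite $m=m(a,b)$. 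So it suffices to produce, for each pair of disjointness scales that we agree to ``spend'', a bound of the form $\omega+(\text{finite})$ on the residual collection, and then to push the same kind of estimate down finitely many further levels. In practice this means: given a finite set $\sigma\subset\NN$ of disjointness parameters that is ``large enough'' in a sense to be made precise, exhibit uniformly bounded families $\mathcal U_i$ ($i\in\sigma$), each $\mathcal U_i$ being $i$-disjoint, whose union covers $Z$ --- equivalently, show $\sigma\notin A(Z)$ --- and control how $|\sigma|$ and $\min\sigma$ must grow.

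The first main step is a uniform estimate on the inner pieces $Z_k$, obtained by adapting the argument of \cite{omega+k} (where $Y_{\omega+k}=\text{as}\bigsqcup_n X((k,n+k),(k,n))$ was shown to have trasdim $\omega+k$). The key geometric point is that in $X((0,k,n),(1,k,n-k))$ all but at most $k+1$ of the $n+1$ coordinates are forced into $2^{k}\ZZ$; hence, whenever the disjointness parameter $r$ is below the threshold determined by $2^{k}$, those coordinates are automatically $r$-separated and contribute nothing, so the covering problem collapses to a $(k+1)$-dimensional one, solvable with a number of uniformly bounded families depending only on $k$ (of the order $\text{asdim}\,\ZZ^{k+1}+1=k+2$), and this number is independent of $n$. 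Above the threshold the surviving $\ZZ^{\,n-k}$-part produces the $\omega$. Formalising this via $\text{Ord}$ should give $\text{Ord}\,A(Z_k)\le\omega+k+1$, and --- crucially for the next step --- the families realising a decomposition at a given level of the $\text{Ord}$-recursion can be chosen with bounds that do not degrade as $n$ varies, so that they survive restriction from a larger disjoint union.

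The second main step assembles the outer union $Z=\text{as}\bigsqcup_k Z_k$. Here I would use (and, if it is not literally quotable from \cite{omega+1}, reprove) the standard fact that for a metric disjoint union in which the distance between distinct pieces tends to infinity, a finite list of uniformly bounded families covers $\text{as}\bigsqcup_k Z_k$ if and only if its restriction covers each $Z_k$; consequently $\sigma\in A(Z)$ iff $\sigma\in A(Z_k)$ for some $k$. Feeding in $\text{Ord}\,A(Z_k)\le\omega+k+1$ together with the threshold phenomenon --- covering $Z_k$ forces at least about $k+2$ of the scales in $\sigma$ to lie below the threshold attached to $2^{k}$, so for a fixed $\sigma$ all but finitely many $Z_k$ are already handled and only finitely many impose genuine constraints --- one shows that once $\min\sigma$ and $|\sigma|$ are suitably large relative to each other, $\sigma\notin A(Z)$. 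Tracking the exact rate at which this forces $|\sigma|$ to grow as we descend the $\text{Ord}$-recursion yields $\text{Ord}\,A(Z)\le 2\omega+1$: one $\omega$ is consumed going over $k$, a second $\omega$ going over $n$ inside the relevant $Z_k$, and a single extra unit accounts for the fact that no finite bound on the number of families works simultaneously for all $k$.

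The main obstacle I anticipate is twofold. First, at the geometric level, the decomposition of $X((0,k,n),(1,k,n-k))$ is not a literal product: which $k+1$ coordinates are ``wild'' varies over the space, so the $r$-disjoint families built sheet-by-sheet over the $\binom{n+1}{k+1}$ possible supports must be glued into genuinely $r$-disjoint, uniformly bounded families --- this is exactly the technical heart inherited from \cite{omega+k} and \cite{omega+1}, and it is where uniformity in $n$ has to be extracted. Second, at the bookkeeping level, pinning down the ``$+1$'' is delicate: one must show that the residual collections produced after spending scales drop in $\text{Ord}$ by exactly the right amount, so that the final value is $2\omega+1$ rather than $2\omega$ or $2\omega+2$; off-by-one errors in the count $k+2$ (versus $k+1$) of families needed for the wild part, and in how many ``large'' scales in $\sigma$ can be recycled harmlessly, are the real danger and will require careful tracking.
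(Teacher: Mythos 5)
Your opening reduction (show, for every $a$, that $\mathrm{Ord}\,A(Z)^{a}\le 2\omega$ by showing $\mathrm{Ord}\,A(Z)^{\{a,b\}}\le\omega+m(a,b)$ for every $b$) has exactly the right shape, and your Step 1 bound $\mathrm{Ord}\,A(Z_k)\le\omega+k+1$ is correct --- it is what the paper uses for the finitely many ``head'' values of $k$, via the inclusion $X((0,k,n),(1,k,n-k))\subset X((0,n),(M+1,n-M))$. The gaps are in Step 2. First, the ``standard fact'' you invoke, $\sigma\in A(Z)$ iff $\sigma\in A(Z_k)$ for some $k$, is false as stated: from $\sigma\notin A(Z_k)$ for every $k$ you only get covers of the pieces with bounds $B_k$ that may blow up with $k$, and an asymptotic disjoint union inherits a cover only when a common bound exists. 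This non-uniformity across $k$ is precisely why the space has trasdim $2\omega+1$ rather than something $\le 2\omega$, so this fact cannot be ``reproved'' and cannot carry the assembly step; what must be produced instead is a covering of all but finitely many pieces simultaneously, with one bound.

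Second, you have misread the geometry of the pieces, and with it you lose the one mechanism that makes such a uniform covering of the tail possible with only the two scales $a,b$. The operative conditions on $X((0,k,n),(1,k,n-k))$ are: at most \emph{one} coordinate outside $2^{k}\ZZ$ (this is the role of $q_1=1$) and at most $k+1$ coordinates outside $2^{n}\ZZ$ --- not ``at most $k+1$ outside $2^{k}\ZZ$'' as in your sketch. Under your reading, a piece $Z_k$ needs roughly $k+2$ families even at scales below its threshold, so for a fixed pair $\{a,b\}$ (or any fixed $\sigma$) the pieces with large $k$ are \emph{not} ``already handled'', your claim that only finitely many $Z_k$ impose constraints has no support, and the bookkeeping cannot close at $2\omega+1$. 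The paper's route uses the $q_1=1$ condition through (the proof of) Proposition 3.2 of \cite{omega+1}: for every $d>0$ there exist just \emph{two} $d$-disjoint uniformly bounded families covering the entire tail $\mathrm{as}\bigsqcup_{k>M}\mathrm{as}\bigsqcup_{n}X((0,k,n),(1,k,n-k))$ for a suitable $M=M(d)$, uniformly in $k$ and $n$ at once. Spending $a$ and $b$ on that tail and bounding the remaining finite head by $\omega+M+1<2\omega$ (essentially your Step 1), combined via Lemma 3.4 of \cite{omega+k}, is what yields $\le 2\omega+1$; this uniform two-family covering of the tail is the missing ingredient your proposal would need to isolate and prove or quote.
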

\begin{proof}
Since $X((0,k,n),(1,k,n-k))\subset X((0,k),(1,n))$, by the proof of Proposition 3.2 in \cite{omega+1}, for any $d>0$, there exists $d-$disjoint uniformly bounded subsets families $\mathcal{U}_0$ and $\mathcal{U}_1$ such that
\[
\text{$\bigcup(\mathcal{U}_0\cup\mathcal{U}_1)=\text{as}\bigsqcup_{k=M+1}^{\infty}(\text{as}\bigsqcup_{n=1}^{\infty}X((0,k,n),(1,k,n-k)))$ for some $M>0$. }
\]

Since
$$\text{as}\bigsqcup_{k=1}^{M}(\text{as}\bigsqcup_{n=1}^{\infty}X((0,k,n),(1,k,n-k)))\subset\text{as}\bigsqcup_{k=1}^{M}(\text{as}\bigsqcup_{n=1}^{\infty}X((0,n),(M+1,n-M)))$$
and trasdim$(\text{as}\bigsqcup_{n=1}^{\infty}X((0,n),(k+1,n-k)))\leq \omega+k+1$, so
\[
\text{trasdim}(\text{as}\bigsqcup_{k=1}^{M}(\text{as}\bigsqcup_{n=1}^{\infty}X((0,k,n),(1,k,n-k))))\leq\omega+M+1<2\omega.
\]
Therefore
\[
\text{trasdim$(\text{as}\bigsqcup_{k=1}^{\infty}(\text{as}\bigsqcup_{n=1}^{\infty}X((0,k,n),(1,k,n-k))))\leq2\omega+1$ by Lemma 3.4 in \cite{omega+k}.}
\]
\end{proof}

\begin{defi}(\cite{Engelking})
Let $X$ be a metric space and let $A,B$ be a pair of disjoint subsets of $X$. We say that a subset $L\subset X$ is a \emph{partition}
of $X$ between $A$ and $B$, if there exist open sets $U,W\subset X$ satisfying the following conditions
$$A\subset U, B\subset W\text{ and }X=U\sqcup L\sqcup W.$$
\end{defi}

\begin{defi}(\cite{omega+k})
Let $X$ be a metric space and let $A,B$ be a pair of disjoint subsets of $X$. For any $\epsilon>0$, we say that a subset $L\subset X$ is an \emph{$\epsilon$-partition}
of $X$ between $A$ and $B$, if there exist open sets $U,W\subset X$ satisfying the following conditions
$$A\subset U, B\subset W, X=U\sqcup L\sqcup W, d(L,A)>\epsilon\text{ and }d(L,B)>\epsilon$$
Clearly, an $\epsilon$-partition $L$ of $X$ between $A$ and $B$ is a partition of $X$ between $A$ and $B$.
\end{defi}

\begin{lem}\rm(\cite{omega+k})
\label{partition2}
Let $L_0\doteq [0,B]^n$ for some $B>0$, $F_i^{+}$, $F_i^{-}$ be the pairs of opposite faces of $L_0$, where $i=1,2,\cdots,n$ and let $0<\epsilon<\frac{1}{6}B$.
For $k=1,2,\cdots, n$, let
$\mathcal{U}_k$ be an $\epsilon$-disjoint and $\frac{1}{3}B$-bounded family of subsets of $L_{k-1}$. Then there exists an $\epsilon$-partition $L_k$ of $L_{k-1}$ between $F_k^+\cap L_{k-1}$ and $F_k^-\cap L_{k-1}$ such that $L_k\subset L_{k-1}\cap(\bigcup \mathcal{U}_k)^c$.
\end{lem}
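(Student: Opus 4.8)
It suffices to explain, for a single index $k$, how to build $L_k$ from the already‑constructed $L_{k-1}$ and the given family $\mathcal{U}_k$; the full statement then follows by iterating this step for $k=1,\dots,n$. The plan is to obtain $L_k$ as the zero set of a continuous function $\psi\colon L_{k-1}\to\RR$ that perturbs the flat slice $\{x\in L_{k-1}:x_k=\tfrac12 B\}$ so as to weave around exactly those members of $\mathcal{U}_k$ that the flat slice meets. Write $\pi_k\colon L_{k-1}\to[0,B]$ for the restricted $k$‑th coordinate projection: it is $1$‑Lipschitz, $F_k^{-}\cap L_{k-1}=\pi_k^{-1}(0)$, $F_k^{+}\cap L_{k-1}=\pi_k^{-1}(B)$, and for every $u\in L_{k-1}$ one has $d\bigl(u,\pi_k^{-1}(0)\bigr)\ge\pi_k(u)$ and $d\bigl(u,\pi_k^{-1}(B)\bigr)\ge B-\pi_k(u)$, since any competitor differs from $u$ in the $k$‑th coordinate by at least that amount.

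First I would pin down the sets that matter. Call $U\in\mathcal{U}_k$ \emph{relevant} if $\tfrac12 B\in\pi_k(U)$. Since $\diam\pi_k(U)\le\diam U\le\tfrac13 B$, a relevant $U$ has $\pi_k(U)\subseteq[\tfrac16 B,\tfrac56 B]$, and, putting $s_U=\sup\pi_k(U)-\tfrac12 B\ge 0$ and $i_U=\tfrac12 B-\inf\pi_k(U)\ge 0$, we get $s_U+i_U=\diam\pi_k(U)\le\tfrac13 B$, hence $\min(s_U,i_U)\le\tfrac16 B$. The mechanism is: near each relevant $U$, push the slice \emph{just past} $U$ in the cheaper of the two vertical directions — upward a little more than $s_U$ if $s_U\le i_U$, otherwise downward a little more than $i_U$ — so that the displacement never exceeds $\tfrac16 B$ plus an arbitrarily small amount.

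To implement this, fix $\eta$ with $0<\eta<\tfrac13 B-\epsilon$ (possible since $\epsilon<\tfrac16 B$), and for each relevant $U$ set $\phi_U(x)=\max\{0,\,1-\tfrac{3}{\epsilon}d(x,U)\}$, so $\phi_U\equiv 1$ on $U$ and $\supp\phi_U\subseteq\{x:d(x,U)\le\epsilon/3\}$. By $\epsilon$‑disjointness these supports are pairwise $\tfrac{\epsilon}{3}$‑apart, so $\Phi:=\sum_{U \text{ relevant}}\phi_U$ is a well‑defined continuous $[0,1]$‑valued function; put
\[
\psi(x)=\pi_k(x)-\tfrac12 B-\sum_{U \text{ relevant}}\sigma_U P_U\,\phi_U(x),
\]
where $(\sigma_U,P_U)=(+1,s_U+\eta)$ if $s_U\le i_U$ and $(\sigma_U,P_U)=(-1,i_U+\eta)$ otherwise, so $0\le P_U\le\tfrac16 B+\eta<\tfrac12 B-\epsilon$. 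Take $L_k=\psi^{-1}(0)$, $U'=\psi^{-1}((-\infty,0))$, $W'=\psi^{-1}((0,\infty))$. Three short checks, all fed by $\pi_k(U)\subseteq[\tfrac16 B,\tfrac56 B]$, finish it: (i) $\Phi$ vanishes on $\pi_k^{-1}(0)\cup\pi_k^{-1}(B)$ (a point there lies at distance $\ge\tfrac16 B>\epsilon/3$ from every relevant $U$), so $\psi=-\tfrac12 B$ on $F_k^{-}\cap L_{k-1}$ and $\psi=\tfrac12 B$ on $F_k^{+}\cap L_{k-1}$, making $L_k$ a partition between the two faces; (ii) on $L_k$ at most one summand is nonzero, so $|\pi_k(x)-\tfrac12 B|\le P_U<\tfrac12 B-\epsilon$, i.e. $\pi_k(x)\in(\epsilon,B-\epsilon)$, whence $d(x,F_k^{-}\cap L_{k-1})>\epsilon$ and $d(x,F_k^{+}\cap L_{k-1})>\epsilon$; (iii) for $x\in U$ with $U$ relevant, $\phi_U(x)=1$ and the other $\phi$'s vanish, so $\psi(x)=\pi_k(x)-\sup\pi_k(U)-\eta\le-\eta<0$ if $\sigma_U=+1$ and $\psi(x)=\pi_k(x)-\inf\pi_k(U)+\eta\ge\eta>0$ if $\sigma_U=-1$, while for $x\in U$ with $U$ not relevant $\Phi(x)=0$ and $\psi(x)=\pi_k(x)-\tfrac12 B\ne 0$; hence $L_k\cap\bigcup\mathcal{U}_k=\varnothing$.

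The one place I would take care is the displacement accounting in the middle steps: cruder choices — always pushing upward, or collapsing each relevant $U$ to the midpoint of $\pi_k(U)$ — permit displacements up to $\tfrac13 B$ and, after the collar of width $\epsilon/3$, produce a partition that can come within roughly $\tfrac16 B-\tfrac{\epsilon}{3}$ of a face, which is less than $\epsilon$ when $\epsilon$ is near $\tfrac16 B$. Selecting the cheaper push direction for each relevant set, so that all displacements stay $\le\tfrac16 B$, is exactly what lets the hypotheses $\epsilon<\tfrac16 B$ and $\diam\mathcal{U}_k\le\tfrac13 B$ carry the argument; the rest is routine bookkeeping.
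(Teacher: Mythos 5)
Your construction is correct: the weighted-bump perturbation $\psi$ is continuous because the supports of the $\phi_U$ are $\tfrac{\epsilon}{3}$-separated, the zero set $L_k=\psi^{-1}(0)$ together with $\psi^{-1}((-\infty,0))\supset F_k^-\cap L_{k-1}$ and $\psi^{-1}((0,\infty))\supset F_k^+\cap L_{k-1}$ gives a partition, the $\epsilon$-disjointness forces $\phi_{U'}$ to vanish on any other member of $\mathcal{U}_k$, so points of relevant members get pushed strictly off the zero set and points of non-relevant members have $\pi_k\neq\tfrac12B$, whence $L_k\cap\bigcup\mathcal{U}_k=\varnothing$; and since $P_U\leq\tfrac16B+\eta$ uniformly, every $x\in L_k$ satisfies $|\pi_k(x)-\tfrac12B|\leq\tfrac16B+\eta$, so $d(L_k,F_k^{\pm}\cap L_{k-1})\geq\tfrac13B-\eta>\epsilon$ — I would state this last bound explicitly, since the definition of $\epsilon$-partition requires the infimum over all of $L_k$ to exceed $\epsilon$, not merely a pointwise strict inequality, but your choice of $\eta<\tfrac13B-\epsilon$ already delivers the uniform version. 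Be aware, though, that this paper does not prove the lemma at all: it is quoted with a citation to \cite{omega+k}, so there is no in-paper argument to match. Relative to the argument in that cited source (and the standard one in this line of papers), which produces $L_k$ by enlarging one face by (neighborhoods of) the members of $\mathcal{U}_k$ that come near it and then taking the frontier, or a level set of the distance to that enlargement, inside $L_{k-1}$, your route is genuinely different in mechanism though equivalent in outcome: you perturb the middle slice $\{x_k=\tfrac12B\}$ by a continuous displacement and take a zero set. The face-enlargement proof gets its clearance from the fact that a $\tfrac13B$-bounded enlargement attached to one face cannot reach the opposite face, while your version localizes the bookkeeping at each relevant set and, via the choice of the cheaper push direction, keeps all displacements at most $\tfrac16B+\eta$, which is exactly what lets the hypotheses $\epsilon<\tfrac16B$ and $\operatorname{diam}\mathcal{U}_k\leq\tfrac13B$ suffice; either proof supports the use made of the lemma in Proposition 3.2.
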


\begin{lem}\rm(see \cite{Engelking}, Lemma 1.8.19)
\label{partition}
Let $F_i^{+}$, $F_i^{-}$, where $i\in\{1,\ldots,n\}$, be the pairs of opposite faces of $I^n\doteq [0,1]^n$. If $I^n=L_0'\supset L_1'\supset \ldots\supset L_n'$ is a decreasing sequence of closed sets such that $L_i'$ is a partition of $L_{i-1}'$ between $L_{i-1}'\cap F_i^{+}$ and $L_{i-1}'\cap F_i^{-}$ for $i\in\{1,2,\ldots,n\}$, then $L_{n}'\neq \emptyset$.
\end{lem}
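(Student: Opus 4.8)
The plan is to reduce the statement to the classical theorem on partitions of the cube, namely that if $M_1,\dots,M_n$ are partitions of $I^n$ with $M_i$ a partition between $F_i^{+}$ and $F_i^{-}$, then $\bigcap_{i=1}^n M_i\neq\emptyset$ (equivalently, $\dim I^n\geq n$; see \cite{Engelking}). The difficulty is that each $L_i'$ is only a \emph{relative} partition, i.e.\ a partition of the closed set $L_{i-1}'$ rather than of $I^n$, so that theorem does not apply directly. I would bridge this gap with the following extension lemma: \emph{if $X$ is a compact metric space, $A,B$ are disjoint closed subsets of $X$, $X_0\subseteq X$ is closed, and $L_0$ is a partition of $X_0$ between $X_0\cap A$ and $X_0\cap B$, then there is a partition $M$ of $X$ between $A$ and $B$ with $M\cap X_0=L_0$.}

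To prove this extension lemma, write $X_0\setminus L_0=G_1\sqcup G_2$ with $G_1,G_2$ open in $X_0$, $X_0\cap A\subseteq G_1$, $X_0\cap B\subseteq G_2$. Since $G_1\cup L_0$ and $G_2\cup L_0$ are closed in $X_0$, the function $f_0(x)=d(x,G_1\cup L_0)-d(x,G_2\cup L_0)$ is continuous on $X_0$, negative on $G_1$, positive on $G_2$, and vanishes exactly on $L_0$; composing with a suitable monotone, sign-preserving and zero-preserving map of $\RR$ onto $[-1,1]$ I may assume $f_0$ takes the value $-1$ on the compact set $X_0\cap A$ and $+1$ on $X_0\cap B$. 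Then $h$ defined as $-1$ on $A$, $+1$ on $B$, and $f_0$ on $X_0$ is a well-defined continuous function on the closed set $A\cup B\cup X_0$ (the three pieces agree on their pairwise intersections by construction), so by the Tietze theorem it extends to $H\colon X\to[-1,1]$; the set $M:=H^{-1}(0)$ is then the required partition of $X$ between $A$ and $B$, and $M\cap X_0=f_0^{-1}(0)=L_0$.

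With the lemma in hand, I would build absolute partitions $M_1,\dots,M_n$ of $I^n$, with $M_i$ a partition between $F_i^{+}$ and $F_i^{-}$, by induction on $i$ so that $M_1\cap\cdots\cap M_i=L_i'$ for each $i$. For $i=1$ take $M_1=L_1'$, which is already a partition of $I^n=L_0'$ between $F_1^{+}$ and $F_1^{-}$. Given $M_1,\dots,M_{i-1}$ with $M_1\cap\cdots\cap M_{i-1}=L_{i-1}'$, apply the extension lemma with $X=I^n$, $X_0=L_{i-1}'$, $A=F_i^{+}$, $B=F_i^{-}$, $L_0=L_i'$ to obtain a partition $M_i$ of $I^n$ between $F_i^{+}$ and $F_i^{-}$ with $M_i\cap L_{i-1}'=L_i'$; hence $M_1\cap\cdots\cap M_i=L_{i-1}'\cap M_i=L_i'$. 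Taking $i=n$ yields $L_n'=M_1\cap\cdots\cap M_n$, which is nonempty by the classical theorem on partitions of the cube, so $L_n'\neq\emptyset$.

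As for where the real work lies: the extension lemma is elementary (Tietze plus the pasting lemma for finite closed covers), and the genuine content — nonemptiness of the intersection of $n$ partitions of $I^n$ between the opposite faces — is the classical fact equivalent to Brouwer's fixed point theorem, which I would simply cite from \cite{Engelking}. Thus the only steps requiring care in the write-up are propagating the identities $M_1\cap\cdots\cap M_i=L_i'$ through the induction and verifying the compatibility conditions that make $h$ continuous on $A\cup B\cup X_0$.
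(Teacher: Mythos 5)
Your argument is correct; the paper does not prove this lemma but simply cites Engelking (Lemma 1.8.19), and your reduction---extending each relative partition $L_i'$ of $L_{i-1}'$ to a partition $M_i$ of $I^n$ between $F_i^{+}$ and $F_i^{-}$ via a Urysohn/Tietze construction and then invoking the classical theorem that $n$ partitions of $I^n$ between opposite faces have nonempty intersection---is exactly the standard textbook proof of the cited result. The only cosmetic difference is that the usual partition-extension lemma gives merely $M_i\cap L_{i-1}'\subseteq L_i'$, which already suffices since then $\emptyset\neq M_1\cap\cdots\cap M_n\subseteq L_n'$; your explicit construction achieving equality $M_i\cap L_{i-1}'=L_i'$ is a legitimate (slightly stronger) variant.
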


\begin{prop}\rm
\label{geq2omega+1}
trasdim$(\text{as}\bigsqcup_{k=1}^{\infty}(\text{as}\bigsqcup_{n=1}^{\infty}X((0,k,n),(1,k,n-k))))>2\omega$.
\end{prop}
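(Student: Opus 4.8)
The goal is to show $\mathrm{trasdim}(X) > 2\omega$ where $X = \mathrm{as}\bigsqcup_{k=1}^\infty(\mathrm{as}\bigsqcup_{n=1}^\infty X((0,k,n),(1,k,n-k)))$; equivalently, for every natural number $k$, the set $A(X)$ contains a witness of Ord-complexity at least $\omega + k$, i.e. $\mathrm{Ord}\, A(X) \geq \omega \cdot 2$. By definition of $\mathrm{Ord}$, it suffices to exhibit, for each $m \in \mathbb{N}$, an element $\sigma_m \in Fin\mathbb{N}$ (or more precisely a suitable $a$) so that $A(X)^{a}$ has $\mathrm{Ord} \geq \omega + m$ for infinitely many $a$; concretely I would show that for each fixed $k$, $A(X)$ restricted to appropriate scales ``contains'' the combinatorics governing $\mathrm{trasdim} \geq \omega + k$ coming from the blocks $X((0,k,n),(1,k,n-k))$ as $n$ ranges over $\mathbb{N}$. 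The engine is Lemma~\ref{partition} (the Engelking non-disappearing-partition lemma) fed through Lemma~\ref{partition2}: a cube $[0,B]^N$ cannot be covered by $N$ families that are $\epsilon$-disjoint and $\frac13 B$-bounded, because iteratively deleting each family leaves a nonempty partition.

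First I would fix $k$ and, given a finite set $\sigma = \{a_1 < a_2 < \cdots < a_N\} \subset \mathbb{N}$ of ``colors'' (disjointness constants), choose $n$ and a scale large enough that the block $X((0,k,n),(1,k,n-k))$ contains an isometrically embedded grid approximating a cube $[0,B]^N$ with $B$ huge relative to $\max \sigma$ — this uses that the defining condition of $X((p_1,\dots),(q_1,\dots))$ allows up to $\sum q_j$ coordinates to escape the coarser lattices, so after using up the ``budget'' $q_1 = 1$ and $q_2 = k$ one still has $n - k$ free coordinates living in $2^0\mathbb{Z} = \mathbb{Z}$, giving an $(n-k)$-dimensional integer cube of arbitrary side length. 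Then I would argue: if $\{\mathcal{U}_{a}\}_{a \in \sigma}$ were uniformly bounded families with each $\mathcal{U}_a$ being $a$-disjoint and $\bigcup_{a\in\sigma}\mathcal{U}_a$ covering $X$, restrict them to this cube; for $B$ chosen with $6\cdot(\text{uniform bound}) < B$ and $\epsilon = \max\sigma$, Lemma~\ref{partition2} applied $N$ times produces a decreasing sequence of partitions $L_0 \supset L_1 \supset \cdots \supset L_N$ with $L_j \cap \bigcup\mathcal{U}_{a_j} = \emptyset$, and rescaling to $[0,1]^N$ and invoking Lemma~\ref{partition} forces $L_N \neq \emptyset$ — but $L_N$ is disjoint from $\bigcup_{a\in\sigma}\mathcal{U}_a$, contradicting that this family covers. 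Hence $\sigma \in A(X)$ whenever $|\sigma| = N \leq n-k$ and the scale is right; tracking how large $N$ can be as a function of the smallest color in $\sigma$ yields $\mathrm{Ord}\, A(X)^{a} \geq \omega$ for infinitely many $a$, and iterating the bookkeeping over $k$ upgrades this to $\mathrm{Ord}\, A(X) \geq \omega\cdot 2$, i.e. $\mathrm{trasdim}(X) > 2\omega$.

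The delicate points are two. First, the cube must sit inside a \emph{single} asymptotic-disjoint-union component $X((0,k,n),(1,k,n-k))$ and at a scale where the metric of $X$ restricted to it agrees (up to the irrelevant additive constants absorbed by choosing $B$ large) with the $\ell^\infty$ or $\ell^1$ metric on the grid, so that ``$\epsilon$-disjoint'' and ``uniformly bounded'' transfer correctly between $X$ and the model cube; this is where the explicit lattice structure $(2^{p_i}\mathbb{Z})$-coordinates and the inequality constraints on escape-counts do the work, and one must be careful that passing to the sub-block does not shrink the available dimension below $N$. Second, the quantifier shuffle in the definition of $\mathrm{Ord}$: to conclude $\mathrm{Ord}\,A(X) \ge \omega\cdot 2 = \omega + \omega$ one needs, for every $m$, that $\mathrm{Ord}\,A(X)^{a} \geq \omega + m$ for some $a$ — so the construction must guarantee not just arbitrarily long monochromatic-free cubes but a nested family of them indexed so that after removing one color $a$ the residual complexity is still $\geq \omega + m$; I expect this descent argument to be the main obstacle, and it is exactly the ``technique developed in \cite{omega+1}'' that the introduction advertises, so I would model this step closely on the $\omega+1$ and $\omega+k$ papers, with $k$ now playing the role of an extra layer pushing the ordinal past $2\omega$.
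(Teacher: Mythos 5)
There is a genuine gap, and it is exactly at the point you defer as ``the main obstacle.'' Your stated reduction is already too weak: proving that for every $m$ there is \emph{some} $a$ with $\mathrm{Ord}\,A(X)^{a}\geq\omega+m$ only rules out $\mathrm{Ord}\,A(X)\leq\omega+m$ for each finite $m$, hence gives $\mathrm{Ord}\,A(X)\geq 2\omega$, not the strict inequality $\mathrm{trasdim}(X)>2\omega$ demanded by the proposition (and $\geq 2\omega$ combined with Proposition \ref{leq2omega+1} does not pin down $2\omega+1$). What is needed is a \emph{single} $a$ with $\mathrm{Ord}\,A(X)^{a}\geq 2\omega$. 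The paper gets this by contradiction: assuming $\mathrm{Ord}\,A(X)\leq 2\omega$ and unfolding the definition twice produces a three-tier covering of $X$ by one $a$-disjoint $B$-bounded family $\mathcal{U}$, $m+1$ families $\mathcal{V}_1,\dots,\mathcal{V}_{m+1}$ that are $b$-disjoint, and $n+1$ families $\mathcal{W}_1,\dots,\mathcal{W}_{n+1}$ that are $c$-disjoint, restricted to the piece $X((0,m+1,m+n+2),(1,m+1,n+1))\cap[0,6B]^{m+n+3}$. The partitions of Lemmas \ref{partition2} and \ref{partition} are then used to avoid first the $n+1$ coarse families $\mathcal{W}_j$ (along the $2^{m+n+2}$-lattice skeleton) and then the $m+1$ families $\mathcal{V}_i$ (along the $2^{m+1}$-lattice skeleton), and the decisive point is that what survives is not merely a nonempty set but a \emph{connected component} $C$ joining two opposite facets of the cube; a connected set of diameter at least $6B$ cannot be covered by the one remaining $a$-disjoint, $B$-bounded family $\mathcal{U}$, and this last step is precisely what pushes the ordinal strictly past $2\omega$. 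Your sketch, which ends with ``$L_N\neq\emptyset$ contradicts covering'' for a single list of colors $\sigma=\{a_1<\dots<a_N\}$, is the standard cube argument at level $\omega$ (or $\omega+k$) and never produces the residual connected piece that defeats the single family $\mathcal{U}$; without it you prove at most $\mathrm{trasdim}(X)\geq 2\omega$.

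There is also a geometric inaccuracy in the embedding step. The block $X((0,k,n),(1,k,n-k))$ does not contain an $(n-k)$-dimensional integer cube of arbitrary side length: its defining constraint allows at most $q_1+q_2=k+1$ coordinates to lie outside $2^{k}\mathbb{Z}$, so only $k+1$ coordinates can be ``fine'' simultaneously; what it does contain is the full $(n+1)$-dimensional coarse grid $(2^{k}\mathbb{Z})^{n+1}$ together with at most $k+1$ fine directions at a time. This is why the paper cannot simply restrict the families to a flat Euclidean cube as in your plan, but must push each partition onto lattice skeletons (the sets $L_j'=\partial M_j$ built from faces of cubes of edge $2^{m+n+2}$, and later of edge $2^{m+1}$), so that every intermediate partition, and finally $C$, stays inside the subspace where the covering hypothesis applies. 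Making that bookkeeping explicit, together with the three-tier quantifier unfolding above, is the substance of the proof; as written, your proposal outsources both to the cited papers.
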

\begin{proof}
If not, trasdim$(\text{as}\bigsqcup_{k=1}^{\infty}(\text{as}\bigsqcup_{n=1}^{\infty}X((0,k,n),(1,k,n-k))))\leq 2\omega$. Let
$$
X=\text{as}\bigsqcup_{k=1}^{\infty}(\text{as}\bigsqcup_{n=1}^{\infty}X((0,k,n),(1,k,n-k))).
$$
Then $\forall a\in\mathbb{N}$, Ord$A(X)^a\leq \omega+m$ for some $m=m(a)\in\mathbb{N}$. By definition, $\forall \tau\in Fin\mathbb{N}$ with $a\notin \tau$ and $|\tau|=m+1$, Ord$A(X)^{\{a\}\sqcup \tau}\leq n$ for some $n=n(a,\tau)$. And then, for any $\sigma\in Fin\mathbb{N}$ with $|\sigma|=n+1$ and $(\{a\}\sqcup\tau)\cap\sigma=\emptyset$, $\{a\}\sqcup\tau\sqcup\sigma\notin A(X)$. For the chioce
$$\tau=\{b,b+1,...,b+m\},\sigma=\{c,c+1,...,c+n\}\text{ for }c>b+m,b>a,$$
we have $a-$disjoint $B-$bounded subset family $\mathcal{U}$ and $b-$disjoint $B-$bounded subset families $\mathcal{V}_1,...,\mathcal{V}_{m+1}$ and $c-$disjoint $B-$bounded subset families $\mathcal{W}_1,...,\mathcal{W}_{n+1}$ such that $\mathcal{U}\cup(\bigcup_{i=1}^{m+1}\mathcal{V}_i)\cup(\bigcup_{j=1}^{n+1}\mathcal{W}_j)$ covers $X$.
So $\mathcal{U}\cup(\bigcup_{i=01}^{m+1}\mathcal{V}_i)\cup(\bigcup_{j=1}^{n+1}\mathcal{W}_j)$ covers $X((0,m+1,m+n+2)(1,m+1,n+1))\cap[0,6B]^{m+n+3}$. Without lose of generality, we can assume that $1\ll a\ll m\ll b\ll n\ll c\ll B$.\

We assume that $p=\frac{6B}{2^{m+n+2}}\in \mathbb{N}$.
Taking a bijection $\psi:\{1,2,\cdots,p^{m+n+3}\} \to \{0,1,2,\cdots,p-1\}^{m+n+3}$, let
$$Q(t)=\prod_{j=1}^{m+n+3}[2^{m+n+2}\psi(t)_{j},2^{m+n+2}(\psi(t)_{j}+1)],\text{in which }\psi(t)_{j}\text{ is the $j$th coordinate of }\psi(t).$$
Let $\mathcal{Q}=\{Q(t)~|~t\in\{1,2,\cdots,p^{m+n+3}\}\}$, then $[0,6B]^{m+n+3}=\bigcup_{Q\in\mathcal{Q}}Q$.

Let $L_0=[0,6B]^{m+n+2}$. By Lemma \ref{partition2},
since $N_{2^{m+n+2}}(\mathcal{W}_1)$ is $(c-2^{m+n+3})$-disjoint and $(2^{m+n+3}+B)$-bounded, there exists a $(c-2^{m+n+3})$-partition $L_1$ of $[0,6B]^{m+n+3}$ such that $L_1\subset (\bigcup N_{2^{m+n+2}}(\mathcal{W}_1))^c\cap [0,6B]^{m+n+3}$ and $d(L_1,F_1^{+/-})>c-2^{m+n+3}$.\

Let $\mathcal{M}_1=\{Q\in \mathcal{Q}|Q\cap L_1\neq \emptyset\}$ and $M_1=\bigcup \mathcal{M}_1$. Since $L_1$ is a $(c-2^{m+n+3})$-partition of $[0,6B]^{m+n+3}$ between $F_1^+$ and $F_1^-$, then  $M_1$ is a $(c-2^{m+n+4})-$partition of $[0,6B]^{m+n+3}$ between $F_1^+$ and $F_1^-$. i.e. $[0,6B]^{m+n+3}=M_1\sqcup A'_1 \sqcup B'_1$ such that $A'_1$, $B'_1$ are open in $[0,6B]^{m+n+3}$ and $A'_1$, $B'_1$ contain two opposite facets $F_1^-$, $F_1^+$ respectively.
Let $L'_1=\partial_{m+n+2}M_1\doteq\bigcup\{\partial_{m+n+2}Q|Q\in \mathcal{M}_1\}$, then $[0,6B]^{m+n+3}\setminus (L'_1\sqcup A'_1 \sqcup B'_1)$ is the union of some disjoint open $(m+n+3)$-dimensional cubes with length of edge $= 2^{m+n+2}$.
So $L'_1$ is a $(c-2^{m+n+4})-$partition of $[0,6B]^{m+n+3}$  between $F_1^+$ and $F_1^-$ and $L'_1\subset (\bigcup\mathcal{W}_1)^c\cap [0,6B]^{m+n+3}$.

For $N_{2^{m+n+2}}(\mathcal{W}_2)$, by Lemma \ref{partition2} and similar argument above, there exists a $(c-2^{n+m+3})$-partition $L_2$ of $L'_1$ such that $L_2\subset (\bigcup  N_{2^{m+n+2}}(\mathcal{W}_2))^c\cap L'_1$ and $d(L_2,F_2^{+/-})>c-2^{n+m+3}$.\

Let $\mathcal{M}_2=\{Q\in \mathcal{M}_1~|~Q\cap L_2\neq \emptyset\}$ and $M_2=\bigcup \mathcal{M}_2$. Since $L_2$ is a $(c-2^{n+m+3})$-partition of $L'_1$ between $L'_1\cap F_2^+$ and $L'_1\cap F_2^-$, then $M_2 \cap L_1'$ is  a $(c-2^{n+m+4})-$partition of $L'_1$  between $L'_1\cap F_2^+$ and $L'_1\cap F_2^-$ i.e. $L'_1=(M_2\cap L_1')\sqcup A'_2 \sqcup B'_2$ such that $A'_2$, $B'_2$ are open in $L'_1$ and $A'_2$, $B'_2$ contain two opposite facets $L'_1\cap F_2^-$, $L'_1\cap F_2^+$ respectively.
Let $L'_2=\partial_{m+n+1}M_2\doteq\bigcup\{\partial_{m+n+1}Q|Q\in \mathcal{M}_2\}$, then $L'_1\setminus (L'_2\sqcup A'_2 \sqcup B'_2)$ is the union of some disjoint open $(m+n+2)$-dimensional cubes with length of edge $= 2^{m+n+2}$.
So $L'_2$ is also a $(c-2^{n+m+4})-$partition of $L'_1$ between $L'_1\cap F_2^+$ and $L'_1\cap F_2^-$ and $L'_2\subset (\bigcup (\mathcal{W}_1 \cup \mathcal{W}_2))^c\cap [0,6B]^{m+n+3}$.

After $n+1$ steps above, we have $L'_{n+1}$ to be a $(c-2^{n+m+4})-$partition of $L'_{n}$ and $$L'_{n+1}\subset (\bigcup (\mathcal{W}_1 \cup...\cup \mathcal{W}_{n+1}))^c\cap [0,6B]^{m+n+3}$$
Note that $L'_{n+1}\subset \{(x_i)_{i=1}^{n+m+3}\in \mathbb{R}^{n+m+3}||\{j|x_j\notin 2^{m+n+2}\mathbb{Z}\}|\leq m+2\}$. For every point $(x_i)_{i=1}^{n+m+3}\in \{(x_i)_{i=1}^{n+m+3}\in \mathbb{R}^{n+m+3}||\{j|x_j\notin 2^{m+n+2}\mathbb{Z}\}|\leq m+2\}$, there exists $i(1),...,i(m+2)\in\mathbb{N}$ such that
$$x_{i(j)}\in[n_{i(j)}2^{m+2},(n_{i(j)}+1)2^{m+2}]\text{ for }j\in\{i(1),...,i(m+2)\} \text{ for some }n_{i(j)}\in \mathbb{Z}$$
$$x_{i(j)}\in2^{n+m+3}\mathbb{Z}\text{ for }j\notin\{i(1),...,i(m+2)\}.$$

Let
\begin{multline*}
  \mathcal{Q}'\doteq \{\prod_{i=1}^{n+m+3}I_i| I_{i(j)}=[n_{i(j)}2^{m+2},(n_{i(j)}+1)2^{m+2}]\text{ for }j\in\{i(1),...,i(m+2)\} \text{ for some }n_{i(j)}\in \mathbb{Z}\text{ and}\\
  I_{i(j)}=\{x_{i(j)}\}\subset2^{n+m+3}\mathbb{Z}\text{ for }j\notin\{i(1),...,i(m+2)\} \text{ and some }\{i(1),...,i(m+2)\}\subset \mathbb{N} \}.
\end{multline*}

So $\mathcal{Q}'$ is a subset family in which each $\prod_{i=1}^{n+m+3}I_i$ is an $(m+2)$-dimensional cube with length of edge$=2^{m+1}$. Let $\mathcal{P}=\{Q'\in \mathcal{Q}'~| Q'\subset L'_{n+1}\}$.

By Lemma \ref{partition2},
since $N_{2^{m+1}}(\mathcal{V}_1)$ is $(b-2^{m+2})$-disjoint and $(2^{m+2}+B)$-bounded, there exists a $(b-2^{m+2})$-partition $L_{n+2}$ of $L'_{n+1}$ such that $L_{n+2}\subset (\bigcup N_{2^{m+1}}(\mathcal{V}_1))^c\cap L'_{n+1}$ and $d(L_{n+2},F_{n+2}^{+/-})>(b-2^{m+2})$.\

Let $\mathcal{M}_{n+2}=\{Q'\in \mathcal{P}|Q'\cap L_{n+2}\neq \emptyset\}$ and $M_{n+2}=\bigcup \mathcal{M}_{n+2}$. Since $L_{n+2}$ is a $(b-2^{m+2})$-partition of $L'_{n+1}$ between $F_{n+2}^+$ and $F_{n+2}^-$, then  $M_{n+2}$ is a $(b-2^{m+3})-$partition of $L'_{n+1}$ between $L'_{n+1}\cap F_{n+2}^+$ and $L'_{n+1}\cap F_{n+2}^-$. i.e. $L'_{n+1}=M_1\sqcup A'_{n+2} \sqcup B'_{n+2}$ such that $A'_{n+2}$, $B'_{n+2}$ are open in $L'_{n+1}$ and $A'_{n+2}$, $B'_{n+2}$ contain two opposite facets $L'_{n+1}\cap F_{n+2}^+$, $L'_{n+1}\cap F_{n+2}^-$ respectively.
Let $L'_{n+2}=\partial_{m+1}M_{n+2}\doteq\bigcup\{\partial_{m+1}Q'|Q'\in \mathcal{M}_{n+2}\}$, then $L'_{n+1}\setminus (L'_{n+2}\sqcup A'_{n+2} \sqcup B'_{n+2})$ is the union of some disjoint open $(m+2)$-dimensional cubes with length of edge $= 2^{m+1}$.
So $L'_{n+2}$ is a $(b-2^{m+3})-$partition of $L'_{n+1}$  between $L'_{n+1}\cap F_{n+2}^+$ and $L'_{n+1}\cap F_{n+2}^-$ and $L'_{n+2}\subset (\mathcal{V}_1)^c\cap L'_{n+1}$.

After $m+1$ steps above, we have $L'_{m+n+2}$ to be a $(b-2^{m+3})-$partition of $L'_{m+n+1}$ which contains a connected component $C$ intersecting two opposite facets $F_{m+n+3}^+,F_{m+n+3}^-$ and
$$L'_{n+m+2}\subset (\bigcup (\mathcal{V}_1 \cup...\cup \mathcal{V}_{m+1}))^c\cap L'_{n+1}.$$
Note that $L'_{n+m+2}\subset \{(x_i)_{i=1}^{n+m+3}\in \mathbb{R}^{n+m+3}||\{j|x_j\notin 2^{m+n+2}\mathbb{Z}\}|\leq m+2\text{ and }|\{j|x_j\notin 2^{m+1}\mathbb{Z}\}|\leq 1\}$. So $$C\subset (\bigcup\bigcup_{i=1}^{m+1} \mathcal{V}_{i})^c\cap(\bigcup\bigcup_{j=1}^{n+1} \mathcal{W}_j )^c \cap X((0,m+1,m+n+2)(1,m+1,n+1))\cap[0,6B]^{m+n+3}.$$

Since $C$ is connected and intersects with two opposite facets $F_{m+n+3}^+,F_{m+n+3}^-$, then $C$ can not be covered by $\mathcal{U}$, which implies $\mathcal{U}\cup(\bigcup_{i=1}^{m+1}\mathcal{V}_i)\cup(\bigcup_{j=1}^{n+1}\mathcal{W}_j)$ can not cover $X$ and hence leads to a contradiction.
\end{proof}

\end{section}

{\bf Acknowledgments.} The author wish to thank the reviewers for careful
reading and valuable comments. This work was supported by NSFC grant of P.R. China (No.11871342,11801219,11301224,11326104,11401256,\\
11501249). And the authors want to thank V.M. Manuilov for helpful discussion.

\providecommand{\bysame}{\leavevmode\hbox to3em{\hrulefill}\thinspace}
\providecommand{\MR}{\relax\ifhmode\unskip\space\fi MR }
\providecommand{\MRhref}[2]{%
  \href{http://www.ams.org/mathscinet-getitem?mr=#1}{#2}
}
\providecommand{\href}[2]{#2}

\end{document}